\def \co{\colon\!}
\def \R{\mathbb{R}}
\def \Z{\mathbb{Z}}
\def\id{\protect\operatorname{id}}
\newtheorem{theorem}{Theorem}
\newtheorem{lemma}[theorem]{Lemma}
\newtheorem{corollary}[theorem]{Corollary}
\newtheorem{conjecture}[theorem]{Conjecture}
\theoremstyle{definition}
\newtheorem{definition}[theorem]{Definition}
\theoremstyle{remark}
\newtheorem{remark}[theorem]{Remark}
\title{Virtual Legendrian isotopy}
\author[V.~Chernov]{Vladimir Chernov}
\address{Department of Mathematics, 6188 Kemeny Hall, Dartmouth College, Hanover, NH 03755, USA}
\email{Vladimir.Chernov@dartmouth.edu}
\author[R.~Sadykov]{Rustam Sadykov}
\address{Departamento de Matem\'aticas,
Cinvestav-IPN
Av. Instituto Polit\'ecnico Nacional 2508
Col. San Pedro Zacatenco
M\'exico, D.F., C.P.~07360, M\'exico}
\email{rstsdk@gmail.com}
\date{\today}
\begin{document}
\begin{abstract} 
An \emph{elementary  stabilization} of a Legendrian knot $L$ in the spherical cotangent bundle $ST^*M$ of a surface $M$ is a surgery that results in attaching a handle to $M$ along two discs away  from the image in $M$ of the projection of the knot $L$. A virtual Legendrian isotopy is a composition of stabilizations, destabilizations and Legendrian isotopies. A class of virtual Legendrian isotopy is called a virtual Legendrian knot.  

In contrast to Legendrian knots, virtual Legendrian knots enjoy the property that there is a 
bijective correspondence between the virtual Legendrian knots and the equivalence classes of Gauss diagrams. 

We study virtual Legendrian knots and show that every such class contains a unique irreducible representative. In particular we get a solution to the following conjecture of Cahn, Levi and the first author: two Legendrian knots in $ST^*S^2$ that are isotopic as virtual Legendrian knots must be Legendrian isotopic in $ST^*S^2.$
\end{abstract}
\maketitle
\leftline {\em \Small 2010 Mathematics Subject Classification.
Primary: 53D10, 57R17 Secondry 57M27}


\leftline{\em \Small Keywords: Legendrian links, stable isotopy, virtual Legendrian links}

\section{Introduction}

Let $M$ be a closed oriented surface, possibly non-connected, and $L$ a Legendrian link in the total space of the spherical cotangent bundle 
$\pi:ST^*M\to M$ of $M$. 
An \emph{elementary  stabilization} of  $L$  is a surgery that results in cutting out from $M$ two discs away from the image $\pi L$ of the projection of $L$ to $M$, and attaching a handle to $M$ along the created boundary components.  The converse operation is called an \emph{elementary destabilization}. More precisely, let $A$ be a simple connected closed curve in $M$ in the complement to $\pi L$. An \emph{elementary destabilization} of $L$ along $A$ consists of cutting $M$ open along $A$ and then capping the resulting boundary circles.  

An elementary destabilization of a link is \emph{trivial} if it chops off a sphere containing no components of $L$.
We say that a Legendrian link is \emph{irreducible} if it does not allow non-trivial destabilizations.

A \emph{virtual Legendrian isotopy}~\cite{CahnLevi} is a composition  of elementary stabilizations, destabilizations, and Legendrian isotopies. A virtual Legendrian isotopy class of a Legendrian link (respectively, Legendrian knot) is called a \emph{virtual Legendrian link} (respectively, \emph{virtual Legendrian knot}). In contrast to Legendrian knots, virtual Legendrian knots enjoy the property \cite{CahnLevi} that there is a bijective correspondence between the virtual Legendrian knots and equivalence classes of Gauss diagrams\footnote{
Similar to Kauffman's~\cite{Kauffman} theory of ordinary virtual knots, the theory of virtual Legendrian knots can be reformulated in three equivalent ways:
\begin{enumerate}
\item As the theory of Legendrian knots in $ST^*F$ modulo stabilization, destabilization and Legendrian isotopy. 
\item As the theory of virtual front diagrams on $\mathbb R^2$ modulo the standard front moves and the virtual front moves, see~\cite[Sections 2 and 7]{CahnLevi}.
\item As the theory of front Gauss diagrams modulo the modifications of Gauss diagrams, see~\cite[Sections 4 and 7]{CahnLevi}. Note that not every Gauss diagram corresponds to an ordinary Legendrian knot. 
\end{enumerate}}.

\begin{figure}[h]
\centering
\includegraphics[height=1.5in]{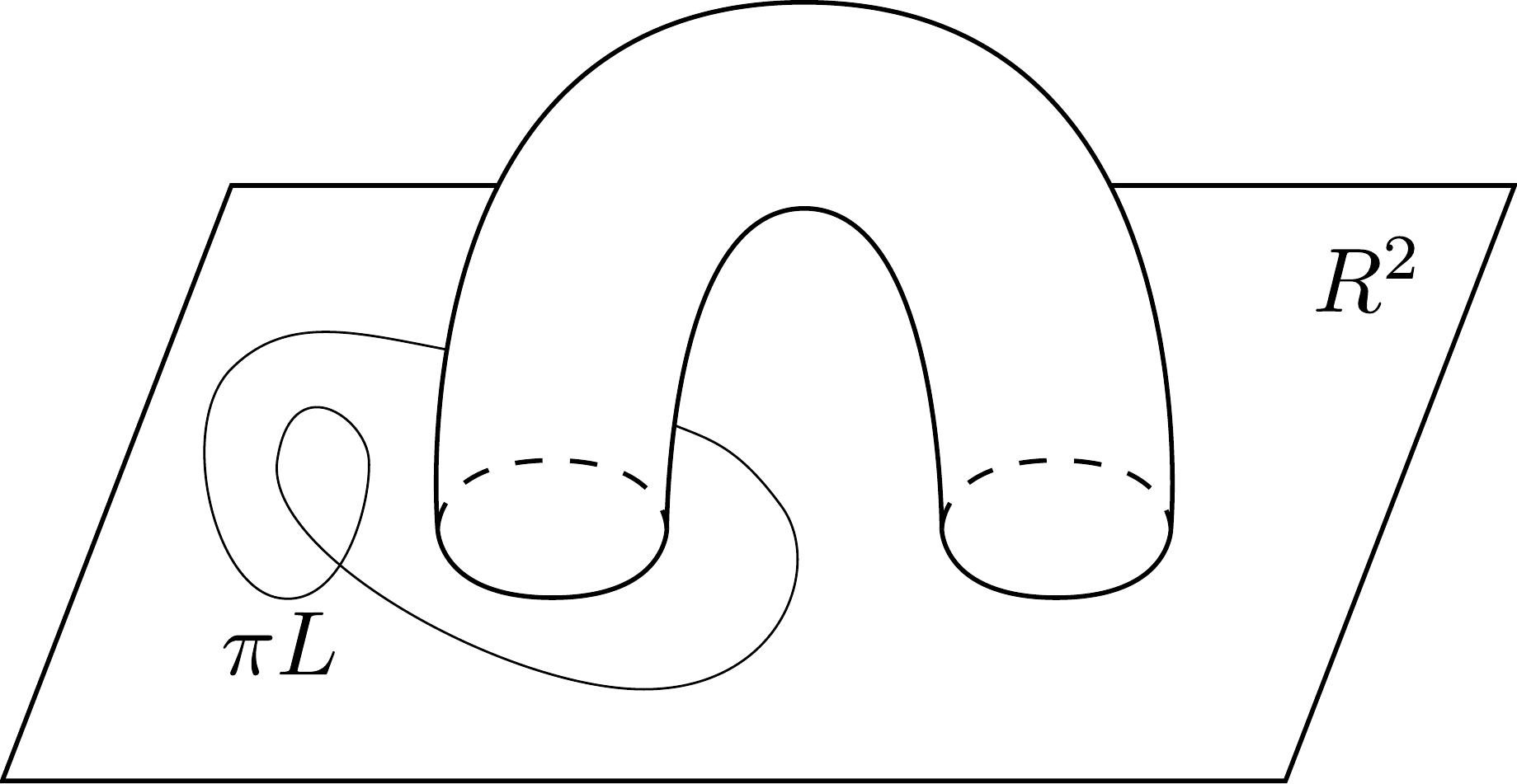}
\caption{An elementary stabilization of a Legendrian curve in the spherical cotangent bundle of $\R^2$.}
\label{fig:1}
\end{figure}



Our main result is Theorem~\ref{th:1} which should be compared to the Kuperberg Theorem on virtual links, \cite[Theorem 1]{GK}. Note that the proof of Kuperberg's results does not seem to generalize to the category of virtual Legendrian knots.

\begin{theorem} \label{th:1} Every virtual isotopy class of Legendrian links contains a unique irreducible representative. The irreducible representative can be obtained from any representative of the virtual Legendrian isotopy class by a composition of destabilizations and isotopies. 
\end{theorem}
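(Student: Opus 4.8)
The plan is to prove existence by a descent argument and uniqueness by reducing, through two commutation lemmas, to a confluence property for destabilizations. For existence I would start from an arbitrary representative $(M,L)$ and keep performing non-trivial destabilizations, using $-\chi(M)$ as a monovariant: cutting along a simple closed curve disjoint from $\pi L$ and capping the two resulting circles with discs removes an annulus ($\chi=0$) and glues in two discs, so it raises $\chi$ by exactly $2$ and strictly lowers $-\chi(M)$. After discarding the initial link-free sphere components, every component produced by a non-trivial destabilization either carries a component of $L$ or has positive genus, so the number of components is at most $|L|+\sum_i g_i$, whence $-\chi(M)=2\sum_i g_i-2k\ge -2|L|$. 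The monovariant is thus bounded below and the descent terminates at an irreducible representative, proving existence and showing it is reachable from the chosen starting point by destabilizations and isotopies.

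The heart is uniqueness. First I would prove a sorting lemma: a destabilization followed by a stabilization can be replaced by a stabilization followed by a destabilization (or by a single isotopy). Given a stabilization attaching a tube $T\cong S^1\times I$ along two discs in $M\setminus\pi L$ and a subsequent destabilization along a curve $A$, I would isotope $A$ rel $\pi L$ to minimize its intersection with the co-core of $T$. The curve then either misses $T$, so the two moves commute; or is isotopic to the belt circle of $T$, so the moves cancel to an isotopy; or runs once over the handle, in which case a handle slide rewrites the composite as a single destabilization of $M$. Bubble-sorting any zig-zag connecting two representatives then yields a common stabilization $(\widehat M,\widehat L)$, from which both representatives are recovered by destabilizations: this is the surface analogue of the Reidemeister–Singer theorem.

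Second, I would prove a diamond lemma for destabilizations: if $(M,L)$ admits non-trivial destabilizations along curves $A$ and $B$, the two results have a common destabilization-descendant. When $A\cap B=\emptyset$ this is immediate, as each curve survives the other's surgery. When they intersect, I would pass to the result $\mathcal D_A$ of destabilizing along $A$ and follow the arcs of $B$ as they reconnect through the capping discs into embedded curves $\widetilde B\subset\mathcal D_A\setminus\pi L$; destabilizing $\mathcal D_A$ along $\widetilde B$ and, symmetrically, $\mathcal D_B$ along $\widetilde A$ should both produce the surface obtained by surgering $M$ simultaneously along the configuration $A\cup B$. Combined with the termination from the existence step, Newman's lemma upgrades this local confluence to a unique irreducible normal form reachable by destabilizations from any surface. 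Applying this to $\widehat M$, which destabilizes to each given irreducible representative (inverting the stabilizations is itself a destabilization along the belt circles), forces the two irreducible representatives to coincide up to Legendrian isotopy; the final clause of the theorem then follows by combining uniqueness with the descent.

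I expect the main obstacle to be the two places where simple closed curves meet transversally inside $M\setminus\pi L$: controlling how a destabilizing curve runs over a freshly attached handle in the sorting lemma, and verifying that the cut-and-cap reconnection in the diamond lemma always yields embedded curves lying in the complement of $\pi L$ with the two orders genuinely producing the same surface. Both require curve surgery performed rel $\pi L$, and the delicate point is keeping the Legendrian link $L$ itself — not merely its projection — under control throughout.
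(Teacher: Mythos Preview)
Your termination argument and your version of the diamond lemma are both in the right spirit, but the scheme has a genuine gap at exactly the point where the Legendrian structure matters. Your diamond lemma is stated for two destabilizing curves $A,B$ that are \emph{both} disjoint from the projection of a \emph{single} link $L$. The confluence you actually need is for destabilizations of two \emph{different} representatives of the same Legendrian isotopy class: one cuts $L_1$ along $A_1\subset M\setminus\pi L_1$, the other cuts $L_2$ along $A_2\subset M\setminus\pi L_2$, with $L_1$ and $L_2$ Legendrian isotopic in $ST^*M$ but $A_2$ possibly meeting $\pi L_1$. Your sorting lemma does not close this gap, because a virtual Legendrian isotopy interleaves stabilizations and destabilizations with honest Legendrian isotopies, and your bubble sort says nothing about those. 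Concretely, take the minimal zig-zag: $L_1$ and $L_2$ irreducible and Legendrian isotopic in $ST^*M$. There is nothing to sort, your ``common stabilization'' is just $(M,L_1)$ (or $(M,L_2)$), and you must still compare a destabilization of $L_1$ with one of $L_2$. More generally, whenever a Legendrian isotopy sits between a destabilization and a stabilization, that isotopy lives on the already-destabilized surface and need not lift to a Legendrian isotopy on the original surface keeping the projection off the old destabilizing curve.

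Reducing to a single representative is precisely the core of the paper's proof, and it is not a curve-surgery statement. The paper shows (its Theorem~\ref{th:6}) that if $L_1,L_2$ are Legendrian isotopic in $ST^*M$ with $M\ne S^2$ and both projections miss a non-contractible simple closed curve $A$, then there is a Legendrian isotopy from $L_1$ to $L_2$ whose projection avoids $A$ throughout. The proof passes to the covering $\tilde M\to M$ corresponding to the subgroup of $\pi_1M$ represented by loops missing $A$, lifts the given isotopy, and then uses a deformation retraction of $\tilde M$ onto the leaf $M\setminus A$ to compress it. Two further lemmas (\ref{l:16} and \ref{maincor}) built on this let one first isotope $L_1$ off $A_2$ without touching $A_1$, and then isotope $L_2$ to $L_1$ off $A_2$, so that one may assume $L_1=L_2$. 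Only then does a diamond-type argument on the curves (Lemma~\ref{equalL}, very close to what you sketch for $A\cap B\neq\emptyset$) finish the job. Your curve-surgery and Euler-characteristic pieces are fine as far as they go, but without an analogue of this covering/retraction step the Legendrian isotopies cannot be absorbed and the argument does not close.
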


The second assertion of Theorem~\ref{th:1} is immediate. Indeed, for any Legendrian link in the given virtual Legendrian isotopy class, only finitely many consecutive non-trivial destabilizations are possible. Thus, after finitely many destabilizations we obtain an irreducible representative.

The main consequence of Theorem~\ref{th:1} is Corollary~\ref{th:2}. 

\begin{corollary}\label{th:2}
Virtual Legendrian isotopy classes of irreducible Legendrian links in $ST^*M$
of a surface $M$ are in bijective correspondence with isotopy classes of irreducible Legendrian links in $ST^*M$. 
\end{corollary}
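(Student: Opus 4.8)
The plan is to deduce Corollary~\ref{th:2} formally from Theorem~\ref{th:1}: essentially all of the content lies in the theorem, and the corollary is obtained by unwinding definitions. First I would set up the comparison map $\Phi$ that sends the Legendrian isotopy class of an irreducible link $L\subset ST^*M$ to its virtual Legendrian isotopy class. This $\Phi$ is well defined because an ordinary Legendrian isotopy is in particular a virtual Legendrian isotopy, so Legendrian isotopic irreducible links automatically lie in the same virtual class. I would then show that $\Phi$ is a bijection by treating injectivity and surjectivity separately.

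Surjectivity is the existence half of Theorem~\ref{th:1}: any virtual Legendrian isotopy class that is represented by an irreducible link in $ST^*M$ already contains such a representative, and the Legendrian isotopy class of that representative is carried to it by $\Phi$. Injectivity is the uniqueness half. Suppose irreducible links $L_1,L_2\subset ST^*M$ determine the same virtual class. Then both are irreducible representatives of one and the same virtual Legendrian isotopy class, so the uniqueness clause of Theorem~\ref{th:1} forces them to be Legendrian isotopic; hence they define the same element in the source of $\Phi$. This is precisely injectivity, and together with surjectivity it yields the desired bijection.

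The one point demanding care --- and the closest thing to a genuine obstacle --- is the meaning of ``unique irreducible representative'' in Theorem~\ref{th:1}. It must be read as uniqueness up to Legendrian isotopy (uniqueness merely up to virtual isotopy would render the statement vacuous), and I would emphasize this reading, since it is exactly what powers the injectivity argument. I would also note that Legendrian isotopies preserve the diffeomorphism type of the base surface, so the unique irreducible representative of a virtual class sits on a well-defined surface $M$; this makes it consistent to restrict to irreducible links in a fixed $ST^*M$ on both sides of $\Phi$, with no class being lost or duplicated in the passage. With these remarks the bijection $\Phi$ is complete, establishing Corollary~\ref{th:2}.
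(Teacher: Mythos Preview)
Your argument is correct and matches the paper's intent: the paper gives no explicit proof of Corollary~\ref{th:2}, treating it simply as the ``main consequence'' of Theorem~\ref{th:1}, and your unwinding of the definitions is precisely that deduction. One minor remark: surjectivity of $\Phi$ is in fact tautological once the target is taken to be virtual classes already represented by irreducible links in $ST^*M$, so only the uniqueness clause of Theorem~\ref{th:1} carries real content here---but this does not affect the validity of your proof.
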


Note that in general virtual Legendrian isotopy classes of (reducible) links are not in bijective correspondence with Legendrian isotopy classes of links. 

In view of Corollary~\ref{th:2}, we get the solutions to the following two Conjectures~\ref{conjecture1},~\ref{conjecture2} formulated by P.~Cahn, A.~Levi and the first author~\cite[Conjecture 1.4, Conjecture 1.5]{CahnLevi}.

\begin{conjecture}\label{conjecture2}
Let $K_1$ and $K_2$ be two Legendrian knots in $ST^*M$ that are isotopic as virtual Legendrian knots  and suppose that $M$ is the surface of smallest genus realizing knots in the virtual Legendrian isotopy class of $K_1$ and $K_2$. Then possibly after a contactomorphism of $ST^*M$ $K_1$ and $K_2$ are Legendrian isotopic in $ST^*M.$
\end{conjecture}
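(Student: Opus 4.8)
The plan is to deduce the conjecture directly from Theorem~\ref{th:1} by observing that the minimal-genus hypothesis forces both $K_1$ and $K_2$ to be irreducible, so that they are two irreducible representatives of one and the same virtual Legendrian isotopy class.

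First I would establish the key reduction: \emph{if the connected surface $M$ realizes a Legendrian knot $K$ of smallest genus in its virtual class, then $K$ is irreducible}. Suppose not; then $K$ admits a non-trivial elementary destabilization along some simple closed curve $A\subset M\setminus \pi K$. I would split into cases according to whether $A$ separates $M$. If $A$ is non-separating, then cutting along $A$ and capping the two resulting boundary circles produces a connected surface of genus $g-1$ still carrying the projection of $K$, contradicting minimality. If $A$ is separating, then since $\pi K$ is connected it lies entirely in one of the two pieces; an Euler characteristic count gives $g=g_1+g_2$ for the genera of the two capped pieces, so the piece containing $K$ has genus $g_1=g-g_2$. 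Non-triviality means the chopped-off piece is not a sphere, whence $g_2\ge 1$ and $g_1<g$, again contradicting minimality. Thus every \emph{non-trivial} destabilization strictly lowers the genus of the knot-bearing component, and both $K_1$ and $K_2$ are irreducible.

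Next I would invoke Theorem~\ref{th:1}. Since $K_1$ and $K_2$ are irreducible representatives of the same virtual Legendrian isotopy class, the uniqueness assertion identifies them with the class's single irreducible representative. The only point requiring care is the meaning of this identification: the reduction of a representative to irreducible form is well defined only up to a re-identification of the underlying surface, that is, up to a contactomorphism of $ST^*M$ induced by a diffeomorphism of $M$. Uniqueness therefore yields a contactomorphism $\varphi$ of $ST^*M$ with $\varphi(K_1)$ Legendrian isotopic to $K_2$, which is exactly the desired conclusion.

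I expect the main obstacle to be the first step: interpreting ``smallest genus'' as the genus of the knot-carrying component (extra closed components without $K$ being irrelevant), and verifying that every non-trivial destabilization---as opposed to a trivial one chopping off an empty sphere---strictly decreases it. A secondary subtlety is extracting the contactomorphism from the uniqueness clause of Theorem~\ref{th:1}: one must confirm that the ambiguity in the irreducible representative is precisely a surface diffeomorphism, whose natural lift to the cotangent bundle is a contactomorphism, so that no equivalence beyond Legendrian isotopy and such contactomorphisms is introduced.
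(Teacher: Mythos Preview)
Your proposal is correct and matches the paper's intended approach: the paper simply asserts that the conjecture follows from Corollary~\ref{th:2} (equivalently Theorem~\ref{th:1}), and the reduction you spell out---minimal genus forces irreducibility, then uniqueness of the irreducible representative up to a surface diffeomorphism (hence contactomorphism of $ST^*M$) gives the conclusion---is exactly the argument left implicit there. Your care about the separating/non-separating cases and about the meaning of ``unique'' is well placed and fills in what the paper omits.
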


\begin{conjecture}\label{conjecture1} Two Legendrian knots in $ST^*S^2$ that are isotopic as virtual Legendrian knots must be Legendrian isotopic in $ST^*S^2.$
\end{conjecture}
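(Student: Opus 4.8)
The plan is to deduce Conjecture~\ref{conjecture1} from Corollary~\ref{th:2} by observing that the sphere is special: every Legendrian knot in $ST^*S^2$ is already irreducible, so on $S^2$ there is no gap between Legendrian isotopy classes and virtual Legendrian isotopy classes of knots.

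First I would establish this irreducibility claim. Let $K$ be a Legendrian knot in $ST^*S^2$ and let $A$ be an arbitrary simple closed curve in $S^2$ lying in the complement of the projection $\pi K$. By the Jordan curve theorem $A$ separates $S^2$ into two discs $D_1$ and $D_2$ with $\partial D_1=\partial D_2=A$. Since $K$ is connected, its projection $\pi K$ is a connected subset of $S^2\setminus A$ and therefore lies entirely in the interior of one of the two discs, say $D_1$. The elementary destabilization of $K$ along $A$ cuts $S^2$ open along $A$ and caps off the resulting boundary circles, producing two spheres; the sphere assembled from $D_2$ contains no point of $K$. Thus this destabilization chops off a sphere disjoint from $K$ and is, by definition, trivial. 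As $A$ was arbitrary, $K$ admits no non-trivial destabilization and is hence irreducible.

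Next, let $K_1$ and $K_2$ be Legendrian knots in $ST^*S^2$ that are isotopic as virtual Legendrian knots. By the previous step both are irreducible, and by hypothesis they represent one and the same virtual Legendrian isotopy class. Applying Corollary~\ref{th:2} with $M=S^2$, the correspondence between virtual Legendrian isotopy classes and Legendrian isotopy classes of irreducible links in $ST^*S^2$ is bijective; in particular the assignment sending a Legendrian isotopy class of an irreducible knot to its virtual class is injective. Since $K_1$ and $K_2$ share a virtual class, they must lie in the same Legendrian isotopy class in $ST^*S^2$, which is exactly the assertion of Conjecture~\ref{conjecture1}.

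The step I expect to require the most care is confirming that one really obtains honest Legendrian isotopy inside the fixed contact manifold $ST^*S^2$, rather than merely Legendrian isotopy after an ambient contactomorphism, which is all that Conjecture~\ref{conjecture2} claims in higher genus. The uniqueness of the irreducible representative provided by Theorem~\ref{th:1} is naturally formulated only up to contactomorphism, because two minimal-genus realizations of a virtual knot can differ by the action of the mapping class group of the base surface. The reason the contactomorphism hypothesis can be dropped precisely for $S^2$ is that $\pi_0\mathrm{Diff}^+(S^2)$ is trivial: every orientation-preserving diffeomorphism of $S^2$ is isotopic to the identity, so the induced contactomorphism of $ST^*S^2$ is isotopic through contactomorphisms to the identity and moves $K_2$ only by a Legendrian isotopy. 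Hence on the sphere ``Legendrian isotopic up to contactomorphism'' collapses to ``Legendrian isotopic,'' which is the rigidity that fails in positive genus and is responsible for the stronger conclusion available here.
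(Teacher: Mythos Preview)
Your argument is correct and is exactly the route the paper intends: the authors simply assert that Corollary~\ref{th:2} yields Conjecture~\ref{conjecture1}, and you have supplied the two missing observations, namely that every Legendrian knot in $ST^*S^2$ is automatically irreducible (since any simple closed curve on $S^2$ bounds a disc, making every destabilization trivial) and that therefore the bijection of Corollary~\ref{th:2} applies directly to $K_1$ and $K_2$. Your closing paragraph on why the contactomorphism caveat of Conjecture~\ref{conjecture2} disappears for $S^2$ via $\pi_0\mathrm{Diff}^+(S^2)=1$ is a nice clarification the paper leaves implicit.
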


In~\cite[Conjecture 1.4]{CahnLevi} and \cite[page 5]{CahnLevi} a similar fact is also conjectured for virtual Legendrian knots in 
$ST^*S^n, n\geq 3$ and  $ST^*\R^n, n\geq 2$. These conjectures are still open.

In view of \cite{CahnLevi}, another immediate corollary of Theorem~\ref{th:1} is Corollary~\ref{c:Gauss}. 

\begin{corollary}\label{c:Gauss} Every Gauss diagram can be represented by a unique irreducible Legendrian knot in $ST^*M$ for some surface $M$. 
\end{corollary}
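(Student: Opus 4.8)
The plan is to derive this statement as a formal consequence of Theorem~\ref{th:1} together with the bijective correspondence between Gauss diagrams and virtual Legendrian knots established in~\cite{CahnLevi}. The entire content lies in chaining these two facts correctly, so I would organize the argument as a short two-step reduction and spend my care on the meaning of the word \emph{unique}.

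First I would recall, from item~(3) of the footnote above and the cited results of~\cite{CahnLevi}, that every equivalence class of (front) Gauss diagrams corresponds to exactly one virtual Legendrian knot, that is, to exactly one virtual Legendrian isotopy class of Legendrian knots in the spherical cotangent bundles of surfaces. Fixing a Gauss diagram $G$, this assigns to $G$ a single virtual Legendrian isotopy class, which I will call $\mathcal{K}$. Because $G$ need not come from a Legendrian knot in any prescribed $ST^*M$, it is essential here that the correspondence is with \emph{virtual} Legendrian knots rather than with ordinary ones.

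Next I would apply Theorem~\ref{th:1} to $\mathcal{K}$. The theorem guarantees both existence and uniqueness of an irreducible representative of $\mathcal{K}$. Any representative of $\mathcal{K}$ is a Legendrian knot in $ST^*M$ for some closed oriented surface $M$, so the irreducible representative is an irreducible Legendrian knot in $ST^*M$ for the surface $M$ on which it is realized; since the representative is determined by $\mathcal{K}$, and $\mathcal{K}$ is determined by $G$, this surface $M$ is itself pinned down by $G$. This yields the existence half of the corollary. For uniqueness, suppose two irreducible Legendrian knots both represent $G$: then both lie in the single class $\mathcal{K}$ attached to $G$, and the uniqueness clause of Theorem~\ref{th:1} forces them to agree.

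I do not expect a genuine obstacle, since all the substantive work has already been carried out in Theorem~\ref{th:1} and in the Gauss-diagram dictionary of~\cite{CahnLevi}. The only point that deserves attention is matching the notion of uniqueness: I would make sure to invoke Theorem~\ref{th:1} in precisely the equivalence in which it is proved (Legendrian isotopy, together with the identifications of the ambient $ST^*M$ implicit in comparing links across different surfaces), so that ``unique irreducible Legendrian knot'' in the corollary means exactly ``unique irreducible representative'' in the theorem. With that alignment in place the corollary follows immediately.
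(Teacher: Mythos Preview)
Your proposal is correct and matches the paper's approach: the paper simply states that the corollary is immediate from Theorem~\ref{th:1} in view of~\cite{CahnLevi}, which is precisely the two-step chain you spell out. You have made explicit exactly what the paper leaves implicit, and your care about the meaning of uniqueness is appropriate.
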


The proof of Theorem~\ref{th:1} consists of several steps, and besides the general case there are two exceptional ones that do not fit the general setting. In section~\ref{s:3} we deal with the exceptional cases. In section~\ref{s:4} we list the steps; these are Lemmas~\ref{l:16}, \ref{maincor} and \ref{equalL}. Lemma~\ref{equalL} is proved in section~\ref{s:4}, while Lemmas~\ref{l:16} and \ref{maincor} are postponed until section~\ref{s:th6} after we present a necessary auxiliary construction.

\subsection*{Acknowledgments}
This paper was written when the first author was at Max Planck Institute for Mathematics, Bonn and the author thanks the institute for its hospitality. This work was partially supported by a grant from the Simons Foundation
$\#235674$ to Vladimir Chernov and by a grant CONACYT $\#179823$ to Rustam Sadykov.

\section{A reformulation of Theorem~\ref{th:1}} 

We say that two links $L_1$ in $ST^*M_1$ and $L_2$ in $ST^*M_2$ are \emph{descent-equivalent}  if after a composition of destabilizations and isotopies of $L_1$ and $L_2$ they become the same.   

Suppose, contrary to the statement of Theorem~\ref{th:1}, there is a Legendrian link $L$ in $ST^*M$ that has two different irreducible descendants. Then there are two Legendrian links $L_1$ and $L_2$ in $ST^*M$, both isotopic to $L$, and two simple closed connected curves $A_1$ and $A_2$ in $M$ such that:
\begin{itemize} 
\item each $A_i$ is disjoint from  $\pi L_i$, and
\item the elementary destabilizations of $L_1$ along $A_1$ and of $L_2$ along $A_2$ are not descent-equivalent. 
\end{itemize}

Note that the second condition implies that  both destabilizations are non-trivial. 
Furthermore, without loss of generality we may assume that 
\begin{itemize}
\item the surface $M$ has no naked sphere components, and 
\item the genus of $M$ is minimal (among those surfaces for which there exist $L_1, L_2, A_1, A_2$ as above).
\end{itemize} 

In particular, every link obtained from $L$ by an elementary non-trivial destabilization (or a composition of elementary non-trivial destabilizations) has a unique descendent. 

Theorem~\ref{th:1} is equivalent to the statement that a tuple $(M, L_1, L_2, A_1, A_2)$ as above does not exist. In the following sections we will assume that such a tuple exists and arrive to a contradiction. 


\section{Exceptional cases}\label{s:3}

We will often require that the manifold $M$ is distinct from a sphere, and that neither $A_1$ nor $A_2$ bounds  a disc; our general argument does not work in these exceptional cases, see Remark~\ref{r:15} below. 

In this section we show that the two assumptions that $M\ne S^2$ and that $A_1$ and $A_2$ are non-contractible are not restrictive (Lemmas~\ref{l:6} and \ref{l:5}).

\begin{lemma}\label{l:6} Suppose that $A_1$ bounds a disc. Then the destabilization of $L_1$ along $A_1$ is descent-equivalent to the destabilization of $L_2$ along $A_2$. 
\end{lemma}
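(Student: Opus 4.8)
The plan is to reduce the lemma to the purely combinatorial task of producing a single link that is a common descendant of both destabilizations, and then to let the minimality hypothesis finish the job. Write $L_1'$ for the destabilization of $L_1$ along $A_1$ and $L_2'$ for the destabilization of $L_2$ along $A_2$. Suppose we have exhibited a link $W$ together with compositions of destabilizations and isotopies $L_1'\to W$ and $L_2'\to W$. Since both $L_1'$ and $L_2'$ arise from $L$ by a non-trivial destabilization, the reduction set up in the previous section guarantees that each of them has a \emph{unique} irreducible descendant. Following $L_1'\to W$ by a reduction of $W$ to an irreducible link exhibits an irreducible descendant of $L_1'$, and likewise for $L_2'$; by uniqueness both of these irreducible links coincide with the irreducible descendant of $W$, hence with each other. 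Thus $L_1'$ and $L_2'$ are descent-equivalent, which is exactly the assertion of the lemma. So the entire problem is to construct the common descendant $W$.

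Next I would analyze the destabilization along $A_1$ geometrically. Because this destabilization is non-trivial, the disc $D$ bounded by $A_1$ must contain a non-empty sublink; write $L_1=L_1^{\mathrm{out}}\sqcup L_1^{\mathrm{in}}$ according to whether a component projects outside or inside $D$, which is well defined since $A_1$ is disjoint from $\pi L_1$. Cutting along $A_1$ and capping then yields $L_1'=(M,L_1^{\mathrm{out}})\sqcup (S^2,L_1^{\mathrm{in}})$, where on the new sphere the sublink $L_1^{\mathrm{in}}$ lies inside the disc $D$. The feature I would exploit is that $A_1$ is contractible: using the isotopy invariance of destabilization I would shrink $D$ together with its contents into an arbitrarily small ball, so that the $A_1$-destabilization is supported in a tiny disc and becomes a local, split-off operation.

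To build $W$ I would then perform both destabilizations at once. A contractible curve can be isotoped off any fixed curve, so after the localization above $A_1$ may be taken disjoint from $A_2$; the point of the localization is to decouple the two surgeries so that they are supported in disjoint regions of the base and therefore commute. Performing the $A_2$-destabilization on $L_1'$ and the localized $A_1$-destabilization on $L_2'$ should land on one and the same link $W$, a common descendant of $L_1'$ and $L_2'$, completing the argument via the reduction of the first paragraph.

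The main obstacle is precisely this decoupling step. The difficulty is that a Legendrian isotopy between $L_1$ and $L_2$ lives in $ST^*M$ and does not project to a diffeomorphism of the base $M$; consequently there is in general no single representative of $L$ whose front is simultaneously disjoint from both $A_1$ and $A_2$, and one cannot naively carry $A_2$ back across the isotopy into the complement of $\pi L_1$. The contractibility of $A_1$ is what is meant to rescue the argument: once $L_1^{\mathrm{in}}$ is confined to a small ball it becomes a split summand that can be kept out of the way of the $A_2$-surgery. Making this precise, namely verifying that the localized $A_1$-destabilization genuinely commutes with the $A_2$-destabilization on a common representative, is the step I expect to require the most care, and it is exactly where the disc-bounding hypothesis is essential (cf.\ Remark~\ref{r:15}, where the general argument is noted to fail in this case).
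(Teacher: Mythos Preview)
Your overall strategy matches the paper's: arrange $A_1\cap A_2=\varnothing$ and then perform both surgeries on a single representative to obtain a common descendant $W$; your use of the genus-minimality hypothesis in the first paragraph is also exactly right. However, you correctly identify, and then leave unresolved, the one genuine issue: producing a representative of $L$ whose front avoids both $A_1$ and $A_2$ simultaneously. Your shrinking of $D$ moves $A_1$ off $A_2$, but does nothing to ensure that $\pi L_1$ avoids $A_2$, so the $A_2$-surgery on $L_1'$ that you need in order to define $W$ is not yet available. As written, the proposal is a correct outline with its crucial step missing.

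The paper closes this gap by moving $A_2$ rather than $A_1$. One first arranges that $\pi L_1$ sits near the centre of $D$. Then, among all pairs whose destabilizations are not descent-equivalent, take one minimizing $|A_1\cap A_2|$, and run an innermost-bigon argument inside $D$: some arc $\alpha_2$ of $A_2$ and some arc $\alpha_1$ of $A_1$ cobound a bigon in $D$ that misses the centre and hence misses $\pi L_1$; compress $\alpha_2$ across this bigon, pushing any pieces of $\pi L_2$ that happen to lie in it out through $\alpha_1$. This lowers $|A_1\cap A_2|$ while leaving both destabilization classes unchanged, contradicting minimality unless $A_1\cap A_2=\varnothing$. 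At that point $A_2$ lies entirely outside $D$ while $\pi L_1$ lies inside, so $L_1$ itself is the sought-after representative disjoint from both curves, and both surgeries can be performed on it to yield the common descendant. The point is that pushing $A_2$ out of $D$ automatically makes it disjoint from $\pi L_1$, whereas your shrinking of $A_1$ gives no control over $\pi L_1\cap A_2$.
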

\begin{proof} We will show that we can assume that the intersection of $A_1$ and $A_2$ is empty and hence destabilizations along $A_1$ and $A_2$ are descent equivalent. In more detail, we assume that $\pi L_1$ is located close to the center of the disk $D$ bounded by $A_1$; the case where $\pi L_1$ is located outside $D$ is entirely similar. If $A_1$ and $A_2$ intersect, take such a pair of curves with the minimal number of intersection points among those pairs of curves destabilizations along which  are not descent equivalent.

We show that the number of intersection points may be further decreased yielding a contradiction. $A_2$ subdivides the disk $D$ into regions, 
by induction at least two of these regions are bigons and one of them does not contain the center of $D$ (with $\pi L_1$ in its small neighborhood). The latter bigon is bounded by an arc $\alpha_2$ of $A_2$ and $\alpha_1$ of $A_1$, see Figure~\ref{Bigon.fig}.
Since this bigon does not contain any components of $\pi L_1$, we can compress the arc $\alpha_2$ along this bigon in such a way that during the compression it does not intersect $\pi L_1.$ If this bigon contains curves of $\pi L_2$ they will be pushed out through $\alpha_1$ during the compression.
\end{proof}

\begin{figure}[h]\label{Bigon.fig}
\centering
\includegraphics[height=2in]{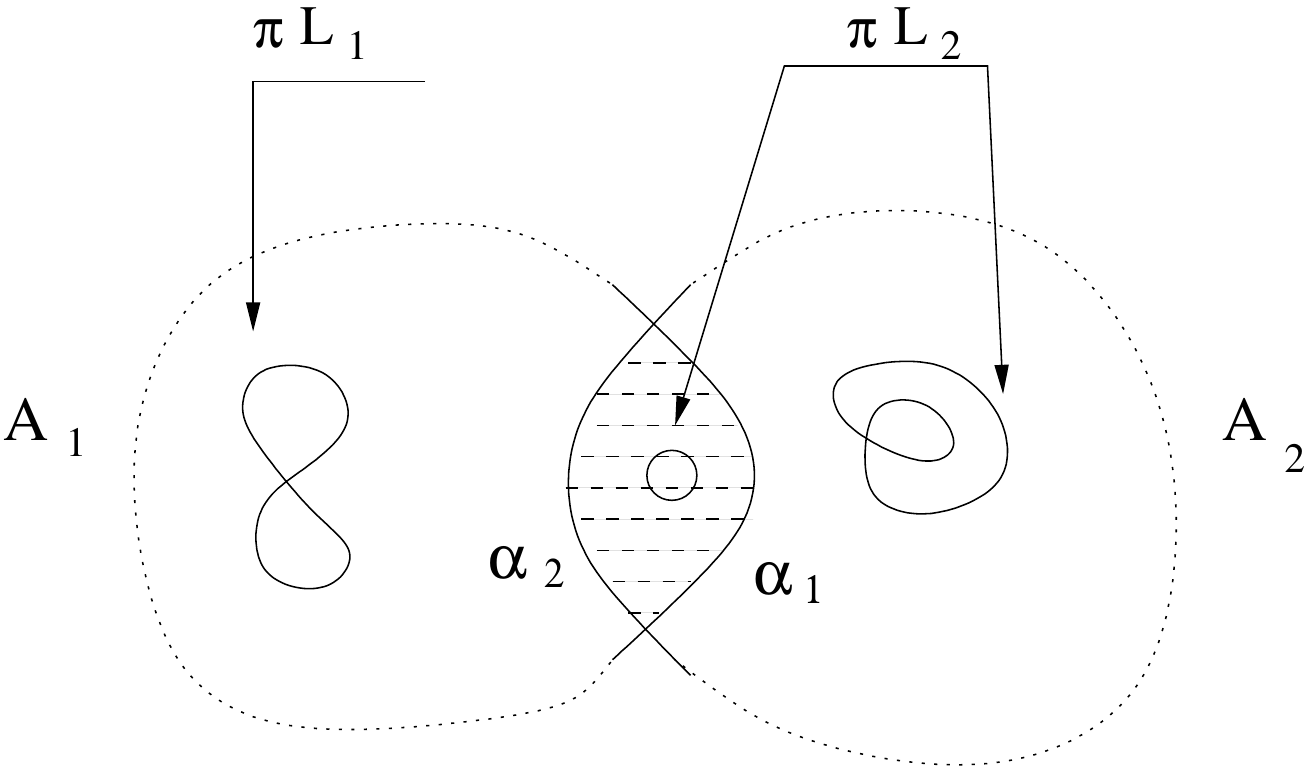}
\caption{The bigon bounded by the arcs $\alpha_1$ and $\alpha_2.$}
\label{fig:1}
\end{figure}

Lemma~\ref{l:5} bellow immediately follows from Lemma~\ref{l:6} since every connected simple closed curve on a sphere bounds a disc. 

\begin{lemma}\label{l:5} The statement of Theorem~\ref{th:1} is true in the case where $M$ is  a sphere. 
\end{lemma}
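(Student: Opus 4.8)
The plan is to reduce directly to Lemma~\ref{l:6} via the reformulation of Theorem~\ref{th:1} set up in Section~2. Recall that Theorem~\ref{th:1} is equivalent to the assertion that no tuple $(M, L_1, L_2, A_1, A_2)$ exists in which $L_1, L_2$ are both isotopic to a common link $L$, each $A_i$ is a simple closed connected curve disjoint from $\pi L_i$, and the destabilizations of $L_1$ along $A_1$ and of $L_2$ along $A_2$ are not descent-equivalent. Thus to prove the lemma it suffices to show that no such tuple can occur when $M=S^2$, and I would argue by contradiction, assuming such a tuple over $S^2$ is given.

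First I would invoke the topology of the sphere: by the Jordan--Schoenflies theorem, every simple closed connected curve on $S^2$ separates $S^2$ into two discs, and in particular bounds a disc. Consequently the curve $A_1$ of any putative tuple automatically satisfies the hypothesis of Lemma~\ref{l:6}.

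Then I would apply Lemma~\ref{l:6} verbatim: it guarantees that whenever $A_1$ bounds a disc, the destabilization of $L_1$ along $A_1$ is descent-equivalent to the destabilization of $L_2$ along $A_2$. This directly contradicts the defining property of the tuple, namely that the two destabilizations are \emph{not} descent-equivalent. Hence no such tuple exists over $S^2$, which is precisely the content of Theorem~\ref{th:1} in the case $M=S^2$.

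The main — indeed the only — point is the elementary observation that simple closed curves on a sphere bound discs; once this is in hand the statement is an immediate corollary of Lemma~\ref{l:6}. I therefore anticipate no genuine obstacle here, since the substantive bigon-compression argument has already been carried out in the proof of Lemma~\ref{l:6}.
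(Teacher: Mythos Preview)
Your proposal is correct and matches the paper's own argument essentially verbatim: the paper simply observes that every simple closed curve on a sphere bounds a disc and then invokes Lemma~\ref{l:6}. Your write-up is just a more detailed unpacking of that one-line deduction.
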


\section{Proof of Theorem~\ref{th:1}}\label{s:4}

 In view of Lemmas~\ref{l:5} and \ref{l:6}, we may (and will) assume that $A_1$ and $A_2$ are not contractible, and that the surface $M$ is not homeomorphic to a sphere.

\begin{proof}[Proof of Theorem~\ref{th:1}]

Recall that each $\pi L_i$ is disjoint from $A_i$. The following Lemma essentially asserts that we may also assume that $\pi L_1$ is disjoint from both $A_1$ and $A_2$.

\begin{lemma}\label{l:16}  
Suppose that $A_1, A_2$ are not null-homotopic and that the surface $M$ is distinct from a sphere. 
Then there is an isotopy of $L_1$ whose projection does not intersect $A_1$ and that takes $L_1$ to a curve whose projections is disjoint from $A_2$. 
\end{lemma}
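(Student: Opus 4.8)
The plan is to reduce the lemma to a purely two–dimensional statement about the front $\pi L_1$ and then to remove its intersections with $A_2$ one pair at a time, in the spirit of Lemma~\ref{l:6}. The starting observation is that $L_1$ and $L_2$ are Legendrian isotopic (both being isotopic to $L$), so their projections $\pi L_1$ and $\pi L_2$ are freely homotopic in $M$, component by component. Since $\pi L_2$ is disjoint from $A_2$ by hypothesis, the geometric intersection number of each component of $\pi L_1$ with the essential curve $A_2$ vanishes. Hence there is no homotopy-theoretic obstruction to making $\pi L_1$ disjoint from $A_2$; the entire difficulty is to realize such a homotopy (i) without ever crossing $A_1$ and (ii) so that it lifts to a Legendrian isotopy of $L_1$ in $ST^*M$.

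For (ii) I would use the standard dictionary between generic homotopies of wave fronts and Legendrian isotopies: a generic homotopy of $\pi L_1$ passes only through the front Reidemeister moves, each of which lifts to a Legendrian isotopy, and a transverse double point of the front corresponds to two distinct points of $ST^*M$, since the two branches carry distinct coorientations. Thus the lift stays embedded, and it suffices to produce a homotopy of the immersed curve $\pi L_1$, supported in $M\setminus A_1$, after which $\pi L_1\cap A_2=\emptyset$.

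For (i) I would first put $A_1$ and $A_2$ in minimal position and arrange $\pi L_1$ to be transverse to $A_2$, and then induct on the number of points of $\pi L_1\cap A_2$. When this number is positive, the vanishing of the geometric intersection number guarantees an innermost bigon $B$ bounded by an arc $a\subset\pi L_1$ and an arc $b\subset A_2$; compressing $a$ across $B$, a finger move realized through front Reidemeister moves, deletes two intersection points. The key point, and the step I expect to be the main obstacle, is to guarantee that this compression can be performed away from $A_1$, that is, that $B$ can be chosen disjoint from $A_1$. Here I would argue that $A_1$ cannot meet $B$: the arc $a$ lies on $\pi L_1$ and is therefore disjoint from $A_1$, so $A_1$ could enter $B$ only through $b\subset A_2$; an innermost such intersection arc would cut off a bigon between $A_1$ and $A_2$, contradicting minimal position, while a closed component of $A_1$ contained in the disc $B$ would contradict that $A_1$ is not null-homotopic. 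This is precisely where the hypotheses $M\neq S^2$ and $A_1,A_2$ essential are used, and where the exceptional cases treated in Lemmas~\ref{l:5} and~\ref{l:6} genuinely fail. Once $B$ is known to avoid $A_1$, the compression is a Legendrian isotopy keeping $\pi L_1$ off $A_1$, and the induction terminates with $\pi L_1$ disjoint from $A_2$, as required.
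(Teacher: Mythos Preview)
Your approach is genuinely different from the paper's. The paper does not touch the front at all: it first arranges $A_1$ and $A_2$ to be geodesics, then passes to the covering $\tilde M\to M$ associated with the subgroup of loops avoiding $A_1$ (Definitions~\ref{d:1}--\ref{d:3}), lifts the given Legendrian isotopy $L_1\leadsto L_2$ to $ST^*\tilde M$, and finally observes that the deformation retraction $\tilde M\to M_1$ can be chosen to carry each geodesic component of the preimage of $A_2$ to itself. Pushing the lifted isotopy down by this retraction produces an isotopy of $L_1$ inside $ST^*M_1$ (hence avoiding $A_1$) ending at a link whose projection misses $A_2$. So the paper's argument is covering-theoretic, while yours is a direct bigon-compression on $M$.

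The substantive gap in your argument is the appeal to the bigon criterion. That criterion is a theorem about two \emph{simple} closed curves, but $\pi L_1$ is an \emph{immersed} wave front, typically with transverse double points and cusps. For an immersed curve $\gamma$ and a simple curve $A_2$ in excess intersection one gets for free only a \emph{singular} bigon, i.e.\ a map of a disc whose boundary runs along an arc of $\gamma$ and an arc of $A_2$; nothing guarantees that this disc is embedded in $M$, nor that its interior meets $A_2$ only in $b$. Yet your $A_1$--avoidance step uses embeddedness in an essential way: the argument ``$A_1$ can enter $B$ only through $b$, and an innermost such arc cuts off an $A_1$--$A_2$ bigon'' only makes sense when $B$ is an honest subset of $M$ with $\partial B=a\cup b$ and $B\cap A_2=b$. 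Establishing that such an embedded bigon always exists for an immersed front is exactly the nontrivial point you are skipping; the cleanest proofs I know pass through a covering of $M$ (e.g.\ the cover corresponding to loops avoiding $A_2$, or hyperbolic geometry), at which stage one is essentially carrying out the paper's argument. So either supply an independent proof that an embedded innermost bigon exists in this immersed setting, or switch to the covering-space route.
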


The proof of Lemma~\ref{l:16} is postponed till section~\ref{s:th6}. Next, we show that not only we can assume that $L_1$ is disjoint from $A_1$ and $A_2$, but we can, in fact, assume that $L_1=L_2$. 

\begin{lemma}\label{maincor} Assume that $M\neq S^2$ and $A_1, A_2$ are not contractible. If $\pi L_1$ does not intersect $A_2$, then there is a Legendrian isotopy of $L_2$ to $L_1$ whose projection to $M$ avoids the curve $A_2$. 
\end{lemma}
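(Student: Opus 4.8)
The plan is to start from any Legendrian isotopy and then remove, one feature at a time, the moments at which its wavefront touches $A_2$. First I would fix a generic Legendrian isotopy $\Phi_t$, $t\in[0,1]$, from $L_2$ to $L_1$; such an isotopy exists because $L_1$ and $L_2$ are both Legendrian isotopic to $L$. Passing to wavefronts, set $f_t=\pi\circ\Phi_t$, a generic $1$-parameter family of fronts in $M$. By hypothesis $f_0=\pi L_2$ and $f_1=\pi L_1$ are disjoint from $A_2$. I would record the interaction of the whole movie with $A_2$ by the trace surface $\Sigma=\bigcup_{t}f_t\times\{t\}\subset M\times[0,1]$ together with its intersection with the cylinder $A_2\times[0,1]$. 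After a general-position perturbation this intersection is a compact $1$-manifold $\Gamma$, and since $f_0,f_1$ miss $A_2$ the locus $\Gamma$ is disjoint from $A_2\times\{0,1\}$; hence $\Gamma$ is a disjoint union of circles lying in $A_2\times(0,1)$. The goal is to modify $\{f_t\}$ rel endpoints until $\Gamma=\emptyset$.

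The homological input that makes this possible is that the algebraic intersection number of $f_t$ with $A_2$ equals $\pi_*[\Phi_t]\cdot[A_2]$, which is independent of $t$ and vanishes because $\pi L_i\cap A_2=\emptyset$. Thus there is no homological obstruction to cancelling the crossings, and I would proceed by an innermost-feature induction on the geometric complexity of $\Gamma$. Viewing $A_2\times[0,1]$ as an annulus with core $A_2$, each component of $\Gamma$ is either null-homotopic in the annulus or winds once around $A_2$. A null-homotopic component bounds a disc $D$ in $A_2\times[0,1]$ that can be taken innermost, i.e.\ meeting $\Gamma$ only along $\partial D$; a core-parallel component is bounded in time by tangencies of $f_t$ with $A_2$. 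In either case such a feature corresponds to a \emph{finger} of the movie that reaches across $A_2$ and retracts, and I would use $D$ (respectively the annular region it sweeps out) to guide a homotopy of the movie that pulls this finger back off $A_2$, strictly decreasing the complexity of $\Gamma$. This is where $A_2$ non-contractible and $M\neq S^2$ enter: they guarantee that $A_2$ is incompressible and that no component of $M\setminus A_2$ is a disc or sphere, so that the innermost discs directing the homotopies lie on the annular side and no spurious null-homotopy of $A_2$ interferes (the contractible case being exactly what Lemma~\ref{l:6} has already disposed of).

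The main obstacle is that a homotopy of wavefronts need not be the wavefront of a Legendrian isotopy: every elimination must be realized through admissible front moves, that is, contactly. To arrange this I would first shrink a collar so that $\pi L_1$ and $\pi L_2$ avoid $A_2\times(-\varepsilon,\varepsilon)$, and localize the whole argument to the neighborhood $\pi^{-1}\bigl(A_2\times(-\varepsilon,\varepsilon)\bigr)\cong ST^*\bigl(A_2\times(-\varepsilon,\varepsilon)\bigr)$ of the pre-Lagrangian torus $T=\pi^{-1}(A_2)$, which carries a standard contact model. The crucial geometric feature is that the fibers of $\pi$ over $A_2$ are Legendrian circles ruling $T$; a finger of $\Phi_t$ that pierces $T$ can therefore be retracted along the fiber direction by a contact isotopy supported near $T$, and this is precisely the Legendrian realization of the front homotopies of the previous paragraph. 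Carrying this out cleanly is exactly what requires the auxiliary construction developed in Section~\ref{s:th6}, which supplies the normal form near $T$ in which each innermost elimination is an honest Legendrian isotopy.

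Finally, iterating the elimination removes every component of $\Gamma$, producing a Legendrian isotopy from $L_2$ to $L_1$ whose wavefront, and hence whose projection to $M$, never meets $A_2$. I expect the same circle of ideas to furnish Lemma~\ref{l:16}, with the finger-retraction carried out so as to keep the projection off $A_1$ as well, so that the two postponed lemmas are naturally proved by a single construction.
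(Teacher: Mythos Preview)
Your plan is a local ``finger--cancellation'' argument on the trace of the front, and it is genuinely different from what the paper does. Two substantive problems prevent it from going through as written.

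\textbf{An actual obstruction in the separating case.} If $A_2$ separates $M$ and the projections of $L_1$ and $L_2$ land in \emph{different} components of $M\setminus A_2$, then \emph{no} isotopy from $L_2$ to $L_1$ can project off $A_2$: every path between the two sides must cross. In this situation your goal $\Gamma=\emptyset$ is unreachable, and the vanishing of the algebraic intersection number you invoke does not detect the difficulty (the class $[A_2]$ is zero in $H_1(M)$ when $A_2$ separates). The paper does not try to produce such an isotopy here; instead it first uses the covering/retraction argument of Theorem~\ref{th:6} to push $\pi L_2$ into a collar of $A_2$, then \emph{translates} $L_2$ across $A_2$ to a link $L_2'$ on the same side as $L_1$ and checks directly that the destabilizations of $L_2$ and $L_2'$ along $A_2$ are descent-equivalent. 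Only after this replacement does one reduce to the case where an $A_2$--avoiding isotopy exists. Your outline has no analogue of this step.

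\textbf{The finger retraction is not local.} Even when the obstruction above is absent, eliminating a component of $\Gamma$ cannot in general be achieved by a contact isotopy supported near $T=\pi^{-1}(A_2)$. Between the two times $t_1<t_2$ at which a strand crosses $A_2$, the Legendrian may have undergone an arbitrary evolution on the far side; undoing the crossing requires replacing the entire sub-isotopy on $[t_1,t_2]$ by one that stays on the near side, which is a global problem, not a normal--form computation in a collar. Your appeal to ``the auxiliary construction developed in Section~\ref{s:th6}'' is misplaced: that section does not build a local model near $T$. It builds the covering $\tilde M\to M$ associated to the subgroup of $\pi_1M$ represented by loops avoiding $A_2$; one shows $\tilde M\cong M_1$, lifts the given isotopy to $ST^*\tilde M$, and then uses a deformation retraction of $\tilde M$ onto the leaf $M_1$ (lifted to a contactomorphism of $ST^*\tilde M$) to compress the \emph{entire} lifted isotopy into $ST^*M_1$. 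This is precisely the global mechanism that your local finger moves are missing; it is also what makes the hypotheses $M\ne S^2$ and $A_2$ non-contractible relevant, since they guarantee the covering is nontrivial. Once you have Theorem~\ref{th:6} in hand, Lemma~\ref{maincor} follows immediately in the same--side case, and the separating different--side case is handled as in the previous paragraph.
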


The proof of Lemma~\ref{maincor} will also be given in section~\ref{s:th6}. 
Lemma~\ref{equalL} below completes the proof of Theorem~\ref{th:1} since its conclusion contradicts the minimality of the genus of $M$.  

\begin{lemma}\label{equalL}  If $L_1=L_2=L$, then the genus of $M$ is not minimal. \\
\end{lemma}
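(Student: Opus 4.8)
The plan is to use the standing minimality hypothesis as the driving engine: since the genus of $M$ is minimal among all counterexamples, \emph{every} link obtained from $L$ by a non-trivial destabilization (or a composition of such) has a unique irreducible descendant. I will combine this with two elementary moves — destabilizing along an auxiliary curve, and collapsing a handle — to show that each possible configuration of $A_1$ and $A_2$ either forces the two destabilizations to be descent-equivalent (contradicting the hypothesis) or exhibits a counterexample of smaller genus.

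First I would rule out the existence of a ``spare'' handle. Suppose $A_3$ is a non-contractible simple closed curve in $M$ disjoint from $\pi L$, from $A_1$, and from $A_2$. Being non-contractible, its destabilization is non-trivial, and since destabilizations along disjoint curves commute, an $A_3$-destabilization cannot merge the descent classes of the two given destabilizations: if it did, then destabilizing along $A_1$ (respectively $A_2$) and then along $A_3$ would reach a common link, making the original destabilizations descent-equivalent already in $M$. Hence in $M_3 := M/A_3$ the destabilizations of $L$ along $A_1$ and $A_2$ remain non-descent-equivalent, while the genus of the component carrying $\pi L$ has strictly dropped — a smaller counterexample, contradicting minimality. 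Therefore no such $A_3$ exists, so $\pi L \cup A_1 \cup A_2$ fills $M$ and every complementary region is a disk.

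Next I would dispose of the case in which $A_1$ and $A_2$ are disjoint. Destabilizing $L$ along $A_1$ produces the same link $L$ in a surface $M_1$ of strictly smaller genus in which $A_2$ descends to a curve disjoint from $\pi L$. By minimality $M_1$ is not a counterexample, so $L$ in $M_1$ has a unique irreducible descendant; consequently the destabilization of $L$ along $A_1$ and the destabilization of $L$ along both $A_1$ and $A_2$ are descent-equivalent. Exchanging the roles of $A_1$ and $A_2$ and using order-independence of disjoint destabilizations, I would conclude that the destabilizations along $A_1$ and along $A_2$ are descent-equivalent, contrary to our assumption. Thus $A_1$ and $A_2$ must intersect, and by the filling conclusion they intersect essentially.

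The remaining essentially-intersecting case is the main obstacle, and it is where the genus reduction must be extracted from the intersections themselves. The favorable sub-case is instructive: if $A_1$ and $A_2$ meet in a single essential point, their regular neighborhood is a one-holed torus disjoint from $\pi L$, of which $A_1$ and $A_2$ are the two cores; destabilizing along either core collapses this handle to a disk and leaves $L$ untouched outside it, so the two destabilizations yield \emph{identical} links and are trivially descent-equivalent — a contradiction. The difficulty is the case of higher geometric intersection number: here $A_2$ fails to descend to a simple closed curve after destabilizing along $A_1$ (it is cut into arcs that are then capped), so the disjoint-case argument does not apply directly. My plan is to choose, among all minimal-genus counterexamples with $L_1=L_2=L$, one with the smallest geometric intersection number of $A_1$ and $A_2$, and then use the disk complementary regions together with a bigon-type compression as in Lemma~\ref{l:6} to reduce that intersection number while preserving non-contractibility of the two curves and non-descent-equivalence of their destabilizations. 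Showing that such a reduction is always available — equivalently, that the single-intersection picture above is the only essential configuration that can survive — is the crux of the argument and the step I expect to require the most care.
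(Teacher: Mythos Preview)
Your outline is close to the paper's argument and your treatment of the disjoint case and the single-intersection case is fine (your single-intersection argument is in fact a bit cleaner than the paper's, which routes through the boundary curve $A_3$ of the punctured torus). The filling discussion in your first paragraph is correct but is not needed for the argument.

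The genuine gap is in the $|A_1\cap A_2|\ge 2$ step. You propose to pick a minimal-intersection counterexample and then apply ``a bigon-type compression as in Lemma~\ref{l:6}'' that \emph{preserves both $A_1$ and $A_2$ as non-contractible simple closed curves} while lowering the intersection number. That move is only available when $A_1$ and $A_2$ bound an actual bigon, i.e., when they are not in minimal position; once you have minimized the intersection number there are no bigons left, and Lemma~\ref{l:6} gave you bigons only because there $A_1$ bounded a disc. Your filling conclusion does not rescue this: two simple closed curves can fill a surface (all complementary regions discs) with arbitrarily many essential intersections and no bigons.

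What the paper does instead is a surgery, not a bigon move. Take an arc $D_1\subset A_1$ between two consecutive points of $A_1\cap A_2$ and compress $A_2$ along $D_1$, producing \emph{two} simple closed curves $A_2',A_2''$. At least one of them, say $A_2'$, gives a non-trivial destabilization; it is disjoint from $\pi L$ and (after a small push) from $A_2$, so destabilizing along $A_2'$ is descent-equivalent to destabilizing along $A_2$. But $A_2'$ meets $A_1$ in strictly fewer points, so by the minimal-intersection choice the $A_1$- and $A_2'$-destabilizations are descent-equivalent, hence so are the $A_1$- and $A_2$-destabilizations --- contradiction. The key difference from your plan is that one does \emph{not} try to keep $A_2$ intact; one replaces it by a new curve in the same descent class with fewer crossings with $A_1$.
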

\begin{proof}  The  argument is similar to that by Greg Kuperberg. Namely, assume, contrary to the statement, that $L_1=L_2$ and the genus of $M$ is minimal. 

It follows that the intersection $A_1\cap A_2$ is non-empty; otherwise destabilizations of $L$ along $A_1$ and $A_2$ are descent-equivalent.  Without loss of generality we may assume that $A_1$ and $A_2$ intersect in the minimal number of points among pairs of simple connected closed curves destabilizations along which are not descent-equivalent. 

If the two curves $A_1$ and $A_2$ intersect at only one point, then take the boundary $A_3$ of a neighborhood of $A_1\cup A_2$. Note that the destabilization along $A_3$ is not trivial; it chops off a naked torus. 
Destabilizations along $A_1$ and $A_3$ are descent equivalent since the curves are disjoint. Similarly for $A_2$ and $A_3$. Therefore destabilizations along $A_1$ and $A_2$ are descent-equivalent, contrary to the assumption. 

Finally, suppose that $A_1$ and $A_2$ have at least two common points. Let $D_1$ be an interval in $A_1$ bounded by two intersection points and containing no other points of $A_2$.  Compress $A_2$ along $D_1$, i.e., remove small arcs of $A_2$ intersecting $A_1$, and then join the two pairs of boundary points of $A_2$ by two new arcs parallel to $D_1$. Then $A_2$ turns into two new connected curves $A_2'$ and $A_2''$ in $M$, see Figure~\ref{Compression.fig}.  The destabilization along at least one of these components, say $A_2'$, is non-trivial. Observe that the destabilizations of $L_2$ along $A_2$ and $A_2'$ are equivalent since both are disjoint from $\pi L_2$ and have no common points (after a small displacement of one of them along a vector field orthogonal to the curve). On the other hand,  the curve $A'_2$  has less intersection points with $A_1$. Therefore destabilizations along $A_1$, $A_2'$ and $A_2$ are descent equivalent. This completes the proof. 
\end{proof}

\begin{figure}[h]
\centering
\includegraphics[height=1in]{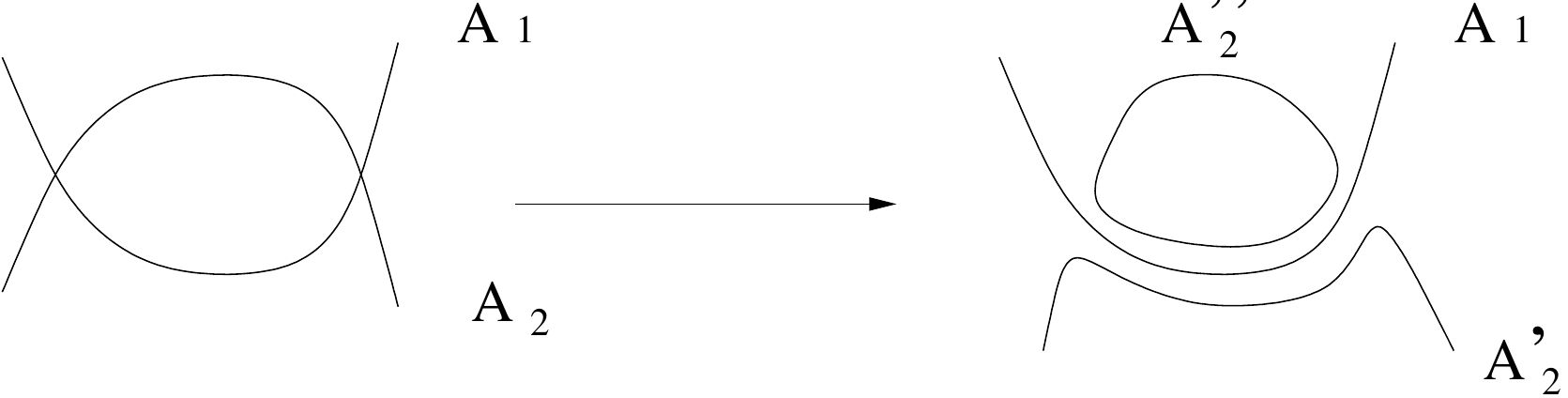}
\caption{Compression of $A_2$ along an arc.}
\label{Compression.fig}
\end{figure}

This completes the proof of Theorem~\ref{th:1} assuming Lemmas~\ref{l:16} and \ref{maincor}. 
\end{proof}

\section{Proof of Lemmas~\ref{l:16} and \ref{maincor}}\label{s:th6}

The main tool in the proof of Lemmas~\ref{l:16} and \ref{maincor} is Theorem~\ref{th:6}. 
To motivate the proof of Theorem~\ref{th:6} let us prove Lemma~\ref{l:10}. We will not use Lemma~\ref{l:10} in what follows. However, this short Lemma~\ref{l:10} explains well the counterintuitive phenomenon  
that stable Legendrian isotopy in certain cases reduces to Legendrian isotopy. 
 
\begin{lemma}\label{l:10} Let $M$ be a hyperbolic surface. Let $L_1$ and $L_2$ be two Legendrian links in $ST^*M$ whose projections belong to an open  disc $D\subset M$. Then $L_1$ and $L_2$ are isotopic in $ST^*M$ if and only if they are isotopic in $ST^*D$. 
\end{lemma}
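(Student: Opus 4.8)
The plan is to transport the whole problem to the universal cover, where the base is a disc, and then use the product contact structure there to confine the isotopy. The ``if'' direction is immediate, since $ST^*D$ is an open subset of $ST^*M$ and a Legendrian isotopy in $ST^*D$ is in particular a Legendrian isotopy in $ST^*M$. For the ``only if'' direction, let $p\co \widetilde M\to M$ be the universal covering; since $M$ is hyperbolic, $\widetilde M$ is the hyperbolic plane $\mathbb H^2$, in particular an open disc, and $M$ is aspherical. The differential of $p$ induces a covering map $\widehat p\co ST^*\widetilde M\to ST^*M$ which is a local contactomorphism, and $ST^*\widetilde M$ is contactomorphic to $ST^*$ of an open disc because $\widetilde M$ is contractible. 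As $D$ is simply connected it lifts homeomorphically to a disc $\widetilde D\subset\widetilde M$, and $p$ restricts to a contactomorphism $ST^*\widetilde D\to ST^*D$; let $\widetilde L_1,\widetilde L_2\subset ST^*\widetilde D$ denote the corresponding lifts of $L_1,L_2$.

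Next I would lift a given Legendrian isotopy $H$ from $L_1$ to $L_2$ to $ST^*\widetilde M$. The obstruction to lifting is the image $H_*\pi_1(L\times[0,1])=H_*\pi_1(L)$ in $\pi_1(ST^*M)$. Each component of $L_1$ has projection inside the disc $D$, hence is null-homotopic in $M$; therefore its class lies in the kernel of $\pi_*\co\pi_1(ST^*M)\to\pi_1(M)$, which, by asphericity of $M$, is exactly the fibre subgroup $\mathbb Z=\widehat p_*\pi_1(ST^*\widetilde M)$. Thus $H$ lifts to a Legendrian isotopy $\widetilde H$ in $ST^*\widetilde M$ starting at $\widetilde L_1$. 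Assuming for the moment that $\widetilde H$ ends at $\widetilde L_2$, it remains to push it into $ST^*\widetilde D$. The projection of $\widetilde H$ to $\widetilde M$ has compact image contained in some large disc $K\supset\widetilde D$; choosing a diffeomorphism $\Psi$ of $\widetilde M$ that is the identity on a neighborhood of $\pi\widetilde L_1\cup\pi\widetilde L_2$ and carries $K$ into $\widetilde D$ (a radial squeezing of the disc fixing its centre), the induced contactomorphism of $ST^*\widetilde M$ fixes $\widetilde L_1$ and $\widetilde L_2$ and takes $\widetilde H$ to a Legendrian isotopy confined to $ST^*\widetilde D$. Projecting by the contactomorphism $ST^*\widetilde D\to ST^*D$ then yields the desired isotopy in $ST^*D$.

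The hard part is the assumption that $\widetilde H$ ends at $\widetilde L_2$ rather than at some deck translate $g\cdot\widetilde L_2$ with $g\in\pi_1(M)\setminus\{1\}$; this ``winding'' is genuine, as an isotopy may drag $L_1$ around a nontrivial loop of $M$ before returning its projection to $D$. To remove it I would first perform a Legendrian self-isotopy of $L_1$ that carries a tubular neighborhood of $L_1$ (contained in $ST^*D$) once around a loop of $M$ representing $g^{-1}$ and returns it to $L_1$; such a self-isotopy exists by a disc-dragging (point-pushing) ambient isotopy of $M$, realized as a loop based at $\id_M$, and its winding is $g^{-1}$. Pre-composing $H$ with this self-isotopy replaces $g$ by the identity without changing the endpoints, after which the argument above applies. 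I expect verifying that the winding can be prescribed arbitrarily, and that the corrective self-isotopy closes up exactly to $L_1$, to be the main technical obstacle; the hyperbolicity of $M$ enters only through the facts that $\widetilde M$ is a contractible disc and that $M$ is aspherical, so that both the clean lifting criterion and the confining squeeze are available.
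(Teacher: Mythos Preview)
Your overall strategy---lift to the universal cover and then squeeze into the disc via a contactomorphism induced by a diffeomorphism of the plane---is the paper's strategy, and your identification of the winding as the only real obstacle is on the mark. However, your proposed fix for the winding does not work as written. You ask for a point-pushing ambient isotopy of $M$ ``realized as a loop based at $\id_M$'' with winding $g^{-1}$; but for a closed hyperbolic surface $\mathrm{Diff}_0(M)$ is contractible (Earle--Eells), so every loop of diffeomorphisms based at $\id_M$ evaluates to a null-homotopic loop at each point and hence has trivial winding. What point-pushing actually produces is an isotopy $\phi_t$ with $\phi_0=\id_M$ but $\phi_1\neq\id_M$; one can then arrange $\phi_1$ to restrict to the identity on a neighbourhood of $\pi L_1$, which is all you need for the induced contact isotopy to return $L_1$ to itself. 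So your argument can be repaired along these lines---and you rightly flag the closing-up as the crux---but not via a loop in $\mathrm{Diff}(M)$ at $\id_M$.

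The paper sidesteps this by handling the winding upstairs rather than downstairs. It fixes once and for all a diffeomorphism $\varphi\colon\R^2\to D$ (chosen to be the identity on a small neighbourhood $U$ containing the projections of $L_1$ and $L_2$) and observes that any two lifts $L',L''$ of the same link $L\subset ST^*U$ to $ST^*\R^2$ are Legendrian isotopic in $ST^*\R^2$: they differ by a deck transformation, and every diffeomorphism of $\R^2$ is smoothly isotopic to the identity. Pushing that isotopy through the contactomorphism $\tilde\varphi$ lands it in $ST^*D$. Thus the lifted isotopy gives $\tilde\varphi(L_1')\sim\tilde\varphi(L_2')$ in $ST^*D$, while comparing lifts gives $L_i\sim\tilde\varphi(L_i')$; no corrective self-isotopy in $ST^*M$ is ever constructed, and the closing-up problem never arises.
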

\begin{proof} Clearly, if $L_1, L_2$ are isotopic in $ST^*D$, then they are isotopic in $ST^*M$. Let us prove the converse implication.

Let $p\co \R^2\to M$ denote the universal covering of $M$. There exist lifts $L_1'$ and $L_2'$ of $L_1$ and $L_2$ respectively such that the isotopy of $L_1$ to $L_2$ lifts to an isotopy of $L_1'$ to $L_2'$ in $ST^*\R^2$. Choose an arbitrary diffeomorphism $\varphi\co \R^2\to D^2$. It lifts to a conctactomorphism $\tilde\varphi$ of spherical cotangent bundles. Thus, we obtain a Legendrian isotopy of $\tilde\varphi(L_1')$ to $\tilde\varphi(L_2')$. It remains to show that $L_1$ admits a Legendrian isotopy  to $\tilde\varphi(L_1')$ and $L_2$ admits a Legendrian isotopy to $\tilde\varphi(L_2')$.

We may assume that both $L_1$ and $L_2$ are links whose images with respect to $\pi$ are located in a small neighborhood $U$ of a point in $D$. Furthermore we may choose $\varphi$ so that the composition of a lift of $D$ and $\varphi$ is the identity map on $U$ so that $\tilde \varphi( L_i)=L_i,i=1,2.$
Then, it remains to show that for any link $L$ in $ST^*D$ and any lifts $L'$ and $L''$  in $ST^*\R^2$, the link $\tilde\varphi(L')$ admits a Legendrian isotopy to  $\tilde\varphi(L'')$. Choose a Legendrian isotopy $\gamma$ from $L'$ to $L''$ in $ST^*\R^2$. The desired Legendrian isotopy is $\tilde\varphi(\gamma)$. 
\end{proof}

\begin{theorem}\label{th:6} Let $L_1$ and $L_2$ be isotopic Legendrian links in the spherical cotangent bundle $ST^*M$ of a connected closed surface $M\ne S^2$, and let $A$ be a simple connected closed curve in $M$ disjoint from $\pi L_1$ and $\pi L_2$. If $A$ breaks $M$ into two surfaces, suppose that $\pi L_1$ and $\pi L_2$ belong to the same path component of $M\setminus A$, and the other path component of $M\setminus A$ is distinct  from the disc.  
Then there exists a Legendrian isotopy of $L_1$ to $L_2$ whose projection to $M$ avoids the curve $A$. 
\end{theorem}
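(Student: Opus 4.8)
The plan is to confine the given Legendrian isotopy to the part of $ST^*M$ lying over the component of $M\setminus A$ that contains the projections. Write $N$ for the path component of $M\setminus A$ containing $\pi L_1$ and $\pi L_2$; it suffices to produce a Legendrian isotopy from $L_1$ to $L_2$ inside $ST^*N$, since such an isotopy projects into $N$ and hence avoids $A=\partial N$. First I would record that under the stated hypotheses $A$ is essential: in the separating case the far component is not a disc, so $A$ bounds no disc, and in the non-separating case this is automatic. Because $M\ne S^2$, the surface $M$ is aspherical with universal cover $\R^2$, the class $[A]$ has infinite order, and the complement of the essential curve $A$ is incompressible, so the inclusion $N\hookrightarrow M$ is $\pi_1$-injective.

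Next I would pass to the covering $q\co\hat M\to M$ corresponding to the subgroup $\pi_1 N\le\pi_1 M$. Since $N$ is incompressible it lifts to a preferred copy $\hat N\subset\hat M$ with $q|_{\hat N}\co\hat N\to N$ a diffeomorphism, and by construction $\hat N\hookrightarrow\hat M$ induces an isomorphism on $\pi_1$. The given Legendrian isotopy lifts to $ST^*\hat M$: the relevant covering $ST^*\hat M\to ST^*M$ corresponds to the preimage of $\pi_1 N$ under the homomorphism induced by the bundle projection $ST^*M\to M$, and the lifting criterion holds because every component of $L_i$ projects to a loop contained in $N$, whose class therefore lies in $\pi_1 N$. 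Choosing lifts $\hat L_1,\hat L_2\subset ST^*\hat N$ of $L_1,L_2$, I obtain a Legendrian isotopy $\hat\gamma$ from $\hat L_1$ to $\hat L_2$ in $ST^*\hat M$.

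The heart of the argument, and the step I expect to be the main obstacle, is identifying the homeomorphism type of $\hat M$: I claim $\hat M$ is diffeomorphic to $N$. The components of $\hat M\setminus\hat N$ are incompressible subsurfaces whose peripheral $\pi_1$ is the infinite cyclic group generated by a lift of $A$; since $\hat N$ carries all of $\pi_1\hat M$, each such component has cyclic fundamental group and hence is a half-open annulus $S^1\times[0,\infty)$ capping a boundary circle of $\hat N$. Thus $\hat M$ is obtained from $\hat N$ by opening each boundary circle into a collar, so it is diffeomorphic to the interior of $\overline N$, i.e.\ to $N$. Establishing that the complementary ends are genuinely annular (rather than contributing extra genus) is the delicate point, and is where asphericity of $M$ and essentiality of $A$ are used. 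Granting this, I would fix a diffeomorphism $\Phi\co\hat M\to N$ whose restriction $\Phi|_{\hat N}$ is isotopic, through embeddings $\hat N\hookrightarrow N$, to $q|_{\hat N}$; concretely $\Phi$ pushes the annular ends of $\hat M$ into collar neighborhoods of $\partial N$, so that $\Phi(\hat N)$ is a $\partial$-parallel subsurface of $N$ and the comparison embedding $(\Phi|_{\hat N})\circ(q|_{\hat N})^{-1}\co N\to N$ is isotopic to the identity.

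Finally I would push everything back down. The diffeomorphism $\Phi$ induces a contactomorphism $\hat\Phi\co ST^*\hat M\to ST^*N\subset ST^*M$, and $\hat\Phi(\hat\gamma)$ is a Legendrian isotopy lying entirely in $ST^*N$, hence with projection avoiding $A$, joining $\hat\Phi(\hat L_1)$ to $\hat\Phi(\hat L_2)$. It remains to connect the endpoints to $L_1$ and $L_2$ inside $ST^*N$: the isotopy of embeddings from $q|_{\hat N}$ to $\Phi|_{\hat N}$ extends, via isotopy extension on $\overline N$, to an ambient isotopy of $N$, which lifts to a contact isotopy of $ST^*N$ carrying $L_i=q(\hat L_i)$ to $\hat\Phi(\hat L_i)$. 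Concatenating these two Legendrian isotopies with $\hat\Phi(\hat\gamma)$ yields a Legendrian isotopy from $L_1$ to $L_2$ supported in $ST^*N$, whose projection avoids $A$, as required. I would single out the computation of the homeomorphism type of the cover as the crux; the remaining steps are the standard lifting, contactomorphism-from-diffeomorphism, and isotopy-extension arguments already used in the proof of Lemma~\ref{l:10}.
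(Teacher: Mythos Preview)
Your approach is the paper's approach: the covering you build is precisely the one in Definitions~\ref{d:1}--\ref{d:3} (the subgroup of elements ``avoiding $A$'' is exactly the image of $\pi_1 N$), and your identification $\hat M\cong N$ is Lemma~\ref{l:9}. Using a global diffeomorphism $\Phi$ and the induced contactomorphism in place of the paper's deformation retraction is a cosmetic difference.

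There is, however, a genuine gap. You write ``Choosing lifts $\hat L_1,\hat L_2\subset ST^*\hat N$ of $L_1,L_2$, I obtain a Legendrian isotopy $\hat\gamma$ from $\hat L_1$ to $\hat L_2$,'' but the homotopy lifting property only gives you a lift of the isotopy starting at $\hat L_1$ and ending at \emph{some} lift $\hat L_2'$ of $L_2$; nothing forces $\hat L_2'=\hat L_2$. Concretely, take $M=T^2$, $A$ a meridian, $L_1=L_2$ a small Legendrian unknot in $N$, and let the given isotopy drag the knot once around the meridian. Then $\hat M=\R\times S^1$, $\hat N=(0,1)\times S^1$, and the lifted isotopy carries $\hat L_1$ to its unit translate, which lies outside $\hat N$. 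Your endpoint argument (comparing $\Phi|_{\hat N}$ with $q|_{\hat N}$) only connects $\hat\Phi(\hat L_2)$ to $L_2$; it says nothing about $\hat\Phi(\hat L_2')$ when $\hat L_2'\notin ST^*\hat N$.

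This is exactly the second case in the paper's proof, and it needs a separate idea. The paper first uses the retraction $\hat M\to\hat N$ to arrange that $\pi L_1$ and $\pi L_2$ lie in a thin collar $U$ of $\partial N$; once that is done, every lift of $L_2$ sits in an annular end of $\hat M$, and any two such lifts differ by a translation along the cylinder, which is an ambient isotopy of $\hat M$ and hence induces a Legendrian isotopy in $ST^*\hat M$ carrying $\hat L_2'$ to $\hat L_2$. Only after inserting this translation can one push everything into $ST^*N$. You correctly flagged the homeomorphism type of the cover as delicate, but the ``wrong lift'' phenomenon is the other crux, and your write-up skips it.
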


Before proving Theorem~\ref{th:6}, let us construct an (in general, non-regular) covering of $M$ by a surface $\tilde M$ homeomorphic to the connected component of $M\setminus A$ which contains $\pi(L_1)$ and $\pi(L_2).$ 
In fact we will give three equivalent definitions, each has its advantage. 

\begin{definition}[First definition]\label{d:1} Choose a base point in $M$ in the path component of $M\setminus A$ that contains $\pi L_1$ and $\pi L_2$. 
We say that an element in the fundamental group $\pi_1M$ \emph{avoids} $A$ if it admits a representing curve that does not intersect $A$. The subset of elements in $\pi_1M$ avoiding $A$ forms a subgroup. Let $\tilde M\to M$ be the covering corresponding to the subgroup of $\pi_1M$ of elements avoiding $A$. 
\end{definition}

\begin{definition}[Second definition]\label{d:2}  Since $M$ is distinct from a sphere, it admits a universal covering $u\co \R^2\to M$. We choose a base point in $\R^2$ that projects to the base point in $M$. Then  every point $x$ in the universal covering space can be identified with the pair of a point $y=u(x)$ and the homotopy class of the projection in $M$ of the curve in $\R^2$  from the base point  to $x$. The manifold $\tilde M$ is the quotient of $\R^2$ by the relation that identifies $(y, \gamma_1)$ with $(y, \gamma_2)$ whenever $\gamma_1\gamma_2^{-1}$ contain a loop that does not intersect $A$.   
\end{definition}

\begin{definition}[Third definition]\label{d:3} Suppose that $A$ does not separate $M$. Since $M$ is either a torus or hyperbolic, there is a infinite covering $\mathbb{H}\to M$ (or $\mathbb R^2\to M$), and we may assume that a lift $\tilde{A}$ of $A$ is a geodesic (every simple non-contractible curve on $M$ is isotopic to a unique simple geodesic). There is a monodromy action $\mathbb{Z}$ on $\mathbb{H}$ (or on $\mathbb R^2$) corresponding to the loop $A$; namely, we know that $M$ is the quotient of $\mathbb{H}$ (or of $\mathbb R^2$) by the action of $\pi_1M$, and the mentioned monodromy action is the action by the subgroup generated by the loop $A$. It acts on the geodesic $\tilde{A}$ by translations. 
Attach $(\mathbb{H}\setminus \tilde{A})/\Z$ (or $(\mathbb{R}^2\setminus \tilde{A})/\Z$) to $M\setminus A$ so that the projections of the two cylinders $(\mathbb{H}\setminus\tilde{A})/\Z$ (or of $(\mathbb{R}^2\setminus \tilde{A})/\Z$) and of the manifold $M\setminus A$ to $M$ form an infinite covering $\tilde M\to M$; this is the desired covering.

Suppose now that $A$ separates $M$ into two components $M_1$ and $M_2$, where $M_1$ is the component containing the images of the projections of $L_1$ and $L_2$ to $M$. Again, take a covering $\mathbb{H}\to M$ (respectively $\R^2\to M$) and cut $\mathbb{H}$ (respectively $\R^2$) along a lift $\tilde{A}$ of $A$. Attach one component of $(\mathbb{H}\setminus\tilde{A})/\Z$ (respectively of $(\mathbb{R}^2\setminus\tilde{A})/\Z$)
to $M_1$ so that their projections to $M$ form a desired covering $\tilde M\to M$. 
\end{definition}

\begin{remark}\label{r:15} If $A$ bounds a disc, i.e., the case that we exclude from the consideration, then the first and the second definitions result in the one sheet covering, while the third definition makes no sense since a lift of a contractible curve $A$ is not a geodesic. 
\end{remark}

Let $M, A$ be as  in Theorem~\ref{th:6} and $\tilde M\to M$ be the covering from the definitions~\ref{d:1},~\ref{d:2},~\ref{d:3}.
If $A$ does not separate $M$, let $M_1$ denote $M\setminus A$. If $A$ does separate $M$, let $M_1$ denote the connected component of $M\setminus A$ that contains the projections of $L_1, L_2.$

\begin{lemma}\label{l:9}
The surface $\tilde M$ is homeomorphic to $M_1$. 
\end{lemma}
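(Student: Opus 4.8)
The claim is that the covering $\tilde M \to M$ constructed via Definitions 5.3–5.5 has total space homeomorphic to the piece $M_1$ of $M$ (either $M \setminus A$ when $A$ is non-separating, or the component of $M\setminus A$ containing the projections). The most transparent route uses the third definition, since it already constructs $\tilde M$ explicitly by gluing cylinders to $M_1$, so the topology is visible; I would use the first definition to justify that all three give the same covering, and then prove the homeomorphism geometrically.

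First I would handle the separating case, which is easier. Here $M_1$ is one closed-off component of $M\setminus A$, and the third definition literally builds $\tilde M$ by taking $M_1$ and attaching a single cylinder $(\mathbb{H}\setminus\tilde A)/\Z$ along the boundary $A$. Since $M_1$ itself has $A$ as a boundary circle and we are attaching a half-open cylinder (which deformation retracts onto its attaching circle), the result deformation retracts onto $M_1$; thus $\tilde M \cong \operatorname{int}(M_1)$ and in particular is homeomorphic to $M_1$ as an open surface. The key point to verify is that the cylinder is attached along the full circle $A$ and introduces no new topology, which follows because $(\mathbb{H}\setminus\tilde A)/\Z$ is an annulus (the $\Z$-action on the geodesic $\tilde A$ is by translation, with quotient a circle).

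The non-separating case is the heart of the argument. Here $M_1 = M\setminus A$ and the third definition attaches \emph{two} boundary cylinders, one for each side of $\tilde A$ in $\mathbb{H}$. The plan is to show that $\tilde M$ is exactly $M\setminus A$ with both its boundary circles capped by half-open annuli, so that again $\tilde M$ deformation retracts onto $M_1$ and is homeomorphic to it. The cleanest way to see this is via the first definition: an element of $\pi_1 M$ avoids $A$ precisely when it lies in the image of $\pi_1(M_1) \to \pi_1 M$, and I would check that this subgroup is exactly the monodromy-stabilizer used in Definition 5.5, so the two coverings coincide. Then the covering space restricted over $M_1$ is a disjoint union of copies of $M_1$, but connectivity of $\tilde M$ forces a \emph{single} copy of $M_1$, with the two ends glued to the attached cylinders; topologically this is $M_1$ with open collars, hence homeomorphic to $M_1$.

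I expect the main obstacle to be the bookkeeping in the non-separating case: one must verify that the subgroup ``avoiding $A$'' really is $\operatorname{im}(\pi_1 M_1 \to \pi_1 M)$ and that the resulting cover has a single $M_1$-sheet rather than several. This is where the hypothesis $M \ne S^2$ and the non-contractibility of $A$ (so that $\tilde A$ is a genuine bi-infinite geodesic, per Remark 5.4) are essential, since otherwise the lift $\tilde A$ would fail to be a line and the quotient $(\mathbb{H}\setminus\tilde A)/\Z$ would not be an annulus. I would close the argument by noting that $\pi L_1, \pi L_2 \subset M_1$ lift canonically into this single sheet, which is the property actually needed in the proofs of Lemmas 4.3 and 4.4.
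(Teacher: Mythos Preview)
Your approach is correct and essentially the same as the paper's: use the third definition to see $\tilde M$ as $M_1$ with one or two half-open cylinders attached along its boundary circle(s), hence homeomorphic to $M_1$. The paper's proof is a single sentence to exactly this effect; your additional work (verifying equivalence of the three definitions, sheet-counting via $\pi_1$) is sound but unnecessary, since Definition~\ref{d:3} already builds $\tilde M$ explicitly starting from $M_1$.
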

\begin{proof} The statement of Lemma~\ref{l:9} immediately follows from Definition~\ref{d:3}. Indeed, the manifold $\tilde M$ is obtained from $M_1$ by attaching one or two cylinders depending on wether $M_1$ has one or two ends. 
\end{proof}

To summarize we constructed a covering $\tilde M\to M$ by a surface homeomorphic to $M_1$. 

\begin{proof}[Proof of Theorem~\ref{th:6}]
Since $\pi L_1$ is disjoint from $A$, it lifts to a simple (not connected if $L_1$ is a link) closed Legendrian curve $L_1'$ in $ST^*M_1\subset ST^*\tilde M$.  Furthermore, the Legendrian isotopy of $L_1$ to $L_2$ lifts to a Legendrian isotopy of $L_1'$ to $L_2'$
in $ST^*\tilde M$. 

Let $U$ be a thin neighborhood of the boundary of $M_1$ disjoint from $\pi L_1$ and $\pi L_2.$
Suppose that $L_2'$ belongs to the leaf $ST^*M_1\subset ST^*\tilde M$. Then there is  an isotopy of the identity map of $ST^*\tilde M$ relative to $ST^{*} (M_1\setminus U)$ to a map with image in $ST^* M_1$ and that brings the isotopy of $L_1'$ to $L_2'$ into $ST^*M_1$. This isotopy of $\id_{ST^*\tilde M}$ comes from a deformation retraction $\tilde M\to M_1$ fixing points in  $M_1\setminus U$.

\begin{figure}[h]
\centering
\includegraphics[height=2.8in]{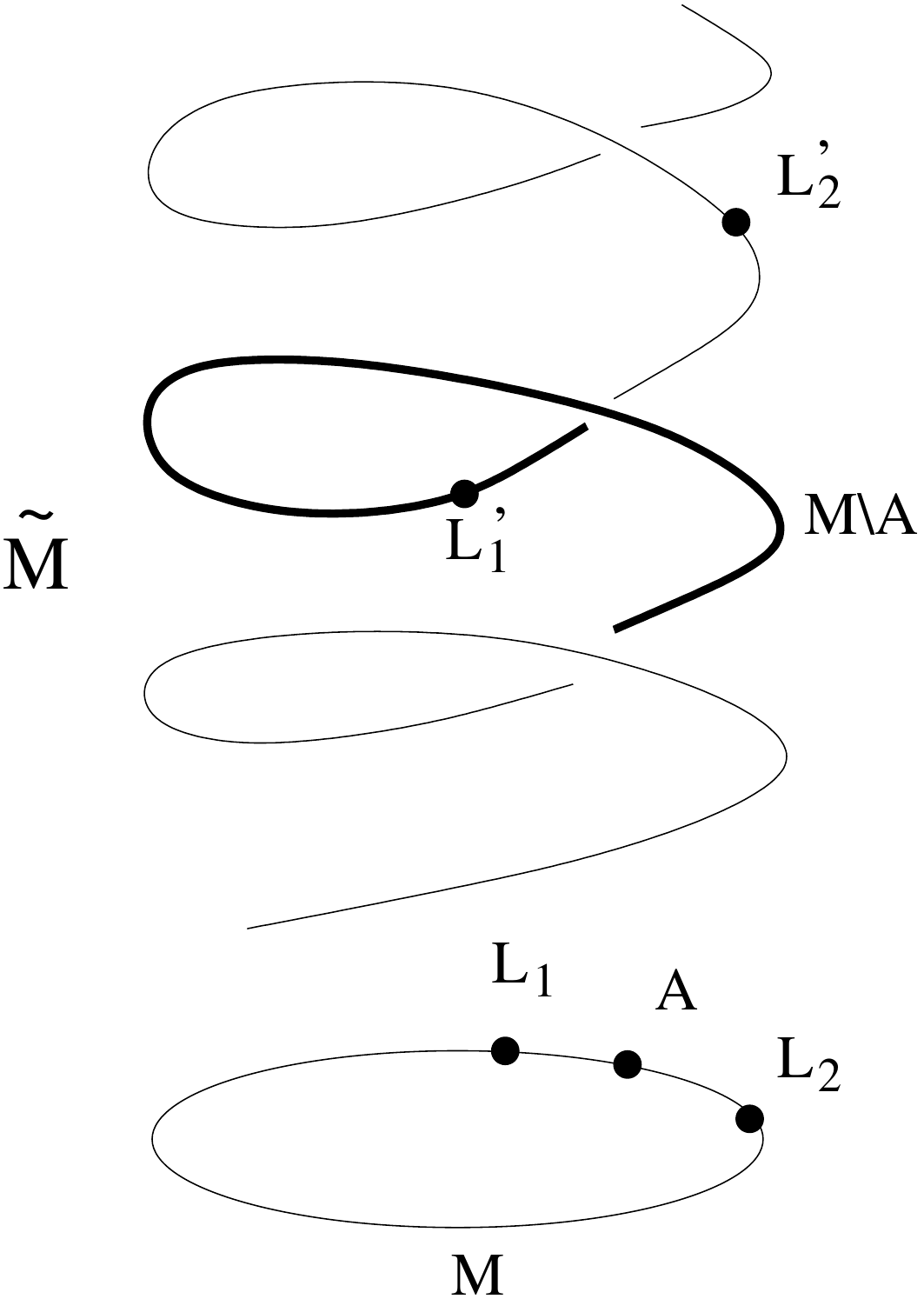}
\caption{Lifts of $L_1$ and $L_2$ to the covering.}
\label{Comvering.fig}
\end{figure}

Suppose now that $L_2'$ belongs to a leaf of the covering $ST^*\tilde M\to ST^*M$ distinct from the leaf $ST^*M_1$. In this case, a deformation retraction of $\tilde M$ to $M_1$ moves $L_2'$ and therefore the above argument does not work. 
During the isotopy of $L'_1$ to $L'_2$ we get that at a certain moment $L'_1$ leaves $ST^*M_1$ and in view of the deformation retraction, in this case we may assume that the projection of $L_1$ to $M$ belongs to the interior of $U\cap M_1$.(Here $U$ is a thin neighborhood of $\partial M_1.$) Similarly, by exchanging the roles of $L_1$ and $L_2$, we may assume that the projection of $L_2$ to $M$ belongs to the interior of $U\cap M_1.$ Now we are in position to apply the isotopy argument. Again, the Legendrian isotopy of $L_1$ to $L_2$ lifts to a Legendrian isotopy of $L_1'$ to $L_2'$. The link $L_2'$ may belong to the leaf distinct from the leaf $ST^*M_1$. However, since its projection $L_2$ is in the interior of $U\cap M_1$, there  is an isotopy that performs a parallel translation of  $L_2'$ to the lift of $L_2$ in the leaf $ST^*M_1$.   This case was considered before.
\end{proof}

\begin{proof}[Proof of Lemma~\ref{maincor}] To simplify notation let us assume that $L$ is a Legendrian knot; the case where $L$ is a link  is similar. If $L_1$ and $L_2$ belong to the same component of $M\setminus A_2$, then the required Legendrian isotopy exists by Theorem~\ref{th:6}.

Suppose now that $A_2$ separates the surface into two components, and that $L_1$
and $L_2$ belong to different path components of $M\setminus A_2$. In this case the argument in the proof of Theorem~\ref{th:6} shows that we may assume that  $\pi L_2$ belongs to a neighborhood of $A_2$. 

Let $L_2'$ be the link obtained from $L_2$ by a translation such that $\pi L_2'$ belongs to the same path component of $M\setminus A_2$ that contains $L_1$. Then the destabilization of $L_2'$ along $A_2$ is descent equivalent to the destabilization of $L_2$ along $A_2$. Indeed, after the destabilization along $A_2$ both $\pi L_2$ and $\pi L_2'$ are curves in a neighborhood of a point, and therefore both links are descent equivalent to the same link in $ST^*S^2$. 

Thus, we may assume that $L_1$ and $L_2$ belong to the same path component of $M\setminus A_2$; this case has been considered above. 
\end{proof}

\begin{proof} [Proof of Lemma~\ref{l:16}] 

Recall that we assumed that $M$ is not a sphere and there is a Legendrian link $L$ represented by a link $L_1$ and $L_2$ and there are two simple closed connected curves $A_1$ and $A_2$ that are not null-homotopic such that the destabilization of $L_1$ along $A_1$ is not descent-equivalent to the destabilization of $L_2$ along $A_2$. Furthermore, we may assume that $A_1$ and $A_2$ are geodesics. Indeed,   there exists an ambient isotopy $\varphi_t$, with $t\in [0,1]$, of the surface $M$ that takes $A_1$ into a geodesic. The ambient isotopy of the surface lifts to an isotopy $\tilde{\varphi}_t$ of the spherical cotangent bundle of $M$. Clearly, the destabilization of the Legendrian link $\tilde{\varphi}_1L_1$ along $\varphi_1A_1$ is descent-equivalent to the destabilization of  $L_1$ along $A_1$. Thus we may assume that $A_1$ is a geodesic. Similarly, we may find an isotopy $\psi_t$ and its lift $\tilde\psi_t$ such that $\psi_1A_2$ is a geodesic, and the destabilization of $L_2$ along $A_2$ is descent-equivalent to the destabilization of $\tilde\psi_1A_2$ along $\psi_1 A_2$. If we replace now the original pairs $(L_1, A_1)$ and $(L_2, A_2)$ with the new pairs $(\tilde\varphi_1L_1, \varphi A_1)$ and $(\tilde\psi_1 L_2, \psi_1 A_2)$, then we obtain an example as the original one but with an additional property that the destabilizations are performed along geodesics.

As in the argument of the proof of Theorem~\ref{th:6}, consider a covering $\tilde M\to M$ by a surface $\tilde M$ homeomorphic to $M_1$. Take the lift of an isotopy from $L_1$ to $L_2$ to a covering isotopy from $L_1'$ to $L_2'$ in $ST^*\tilde M$. A crucial observation is that the inverse image $A_2'$ of $A_2$ in $\tilde M$ consists of disjoint geodesics.  The parts of these geodesics over cylinders attached to $M_1$ are easy to visualize. There is an isotopy of $\tilde M$ to $M_1$ that at each moment of time takes the geodesics of $A_2'$ to themselves. This isotopy takes $L_1$ to a curve disjoint  both from $A_1$ and $A_2$.  
\end{proof}

\end{document}